\newcommand{\equ}{\operatorname{\acute equ}}
\newtheorem{theorem}{Theorem}
\newtheorem{lemma}[theorem]{Lemma}
\newtheorem{example}[theorem]{Example}
\newtheorem{definition}[theorem]{Definition}
\newtheorem{corollary}[theorem]{Corollary}
\newtheorem{conjecture}[theorem]{Conjecture}
\newtheorem{op}[theorem]{Open Question}
\newcommand{\bigzero}{\mbox{\normalfont\Large\bfseries 0}}
\newcommand{\rvline}{\hspace*{-\arraycolsep}\vline\hspace*{-\arraycolsep}}
\def\unprotectedboldentry#1{\textcolor{Red}{\large{#1}}}
\def\boldentry{\protect\unprotectedboldentry}
\newcommand{\tikztableauinternal}[1]{
    \def\newtableau{#1}
    \coordinate (x) at (-0.5,0.5);
    \coordinate (y) at (-0.5,0.5);
    \foreach \row in \newtableau {
        \coordinate (x) at ($(x)-(0,1)$);
        \coordinate (y) at (x);
        \foreach \entry in \row {
            \ifthenelse{\equal{\entry}{X}}
               {
                \node (y) at ($(y) + (1,0)$) {};
                \fill[color=gray!10] ($(y)-(0.5,0.5)$) rectangle +(1,1);
                \draw[color=gray, dotted] ($(y)-(0.5,0.5)$) rectangle +(1,1);
               }
               {
                \ifthenelse{\equal{\entry}{\boldentry X}}
                   {
                    \node (y) at ($(y) + (1,0)$) {};
                    \fill[color=gray] ($(y)-(0.5,0.5)$) rectangle +(1,1);
                    \draw ($(y)-(0.5,0.5)$) rectangle +(1,1);
                   }
                   {
                    \node (y) at ($(y) + (1,0)$) {\entry};
                    \draw ($(y)-(0.5,0.5)$) rectangle +(1,1);
                   }
               }
            }
        }
}
\newlength\cellsize \setlength\cellsize{12\unitlength}
\newcommand\cellify[1]{\def\thearg{#1}\def\nothing{}%
\ifx\thearg\nothing
\vrule width0pt height\cellsize depth0pt\else
\hbox to 0pt{\usebox2\hss}\fi%
\vbox to 12\unitlength{
\vss
\hbox to 12\unitlength{\hss$#1$\hss}
\vss}}
\newcommand\tableau[1]{\vtop{\let\\=\cr
\setlength\baselineskip{-12000pt}
\setlength\lineskiplimit{12000pt}
\setlength\lineskip{0pt}
\halign{&\cellify{##}\cr#1\crcr}}}
\title{The Limit Profile of Star Transpositions}
\author{Evita Nestoridi$^{\ast}$
}
\begin{document}

\maketitle

\begin{abstract}
We prove that the limit profile of star transpositions at time $t= n \log n +cn$ is equal to $d_{\textup{T.V.}} (\mbox{Poiss}(1+e^{-c}), \mbox{Poiss}(1))$. We prove this by developing a technique for comparing the limit profile behavior of two reversible Markov chains on the same space, that share the same stationary distribution and eigenbasis. This allows us to compare the limit profile of star transpositions to the limit profile of random transpositions, as studied in \cite{Teyssier}, and prove that they have the same limit profile at the respective cutoff times. 
\end{abstract}


\let\thefootnote\relax
\footnotetext{  $^{\ast}$ \textit{Princeton University, United States. E-Mail}: exn@princeton.edu, \\ 
\phantom{.} \hspace{0.85cm}Supported by NSF Grant DMS-$2052659$.}

\section{Introduction}
Cutoff, a phenomenon according to which a Markov chain converges abruptly to the stationary measure, has been a central question in Markov chain mixing (see \cite{JS} for a nice exposition on the history of cutoff and recent developements). 
Recently, there has been exciting progress towards answering the even sharper question of determining the limit profile of various Markov chains. The limit profile captures the exact shape of the distance of the Markov chain from stationarity. In recent progress, Teyssier \cite{Teyssier} derived an exact formula for the limit profile of random transpositions, improving the seminal result of Diaconis and Shahshahani \cite{DiaSha} and proving a conjecture of Matthews \cite{M}. He extended the Fourier transform arguments of \cite{DiaSha} to use representation theory in order to study limit profiles of conjugacy class walks. In \cite{NT}, Teyssier's representation theory technique is generalized to study the limit profile of any general reversible Markov chain using its entire spectrum.

This paper focuses on determining the limit profile of a reversible Markov chain by comparing to another Markov chain on the same configuration space whose limit profile is known. The goal of this paper is to prove that the star transposition shuffle exhibits the same limit profile behavior as random transpositions. The assumptions needed for this new theory are that the two Markov chains are simultaneously diagonalizable, and that they both exhibit $\ell_2$ and total variation distance cutoff. The technique introduced in the present paper differs from the traditional comparison theory developed by Diaconis and Saloff-Coste (\cite{DS}, \cite{DS2}); it  compares total variation distances via purely algebraic techniques, while the Diaconis and Saloff-Coste technique compares $\ell^2$ norms via path techniques. 

We now give a general introduction on mixing times, cutoff and limit profiles. Let $X$ be a finite state space with $\vert X \vert=n$, and let $P$ be the transition matrix of an aperiodic and irreducible Markov chain. In other words, the entry $P^t(x,y)$ is the probability of the walk starting at $x$ and being at $y$ after $t$ steps, for every $t \in \mathbb{N}$. The measure $P_x^t(\cdot)=P^t(x, \cdot)$ converges to a unique measure $\pi(\cdot)$ on $X$ as $t$ goes to infinity.
We study this convergence with respect to total variation distance, which is defined as
$$d_x(t)=\Vert P_x^t- \pi \Vert_{\textup{T.V.}} := \frac{1}{2} \sum_{y \in X} \vert P_x^t(y)- \pi(y) \vert .$$
 We set 
\[d(t)=  \max_{x \in X} \lbrace d_x(t) \rbrace .\]
\begin{definition}
The mixing time with respect to the total variation distance is defined as 
$$t_{\textup{mix}}(\varepsilon):= \min \{ t :   d(t)   \leq \varepsilon \},$$
for every $\varepsilon \in (0,1)$.
\end{definition}

Cutoff describes a phase transition: as we run the family of Markov chains, the total variation distance is almost equal to $1$, and then suddenly it drops and approaches zero as $n$ grows. We now give the formal definition of cutoff.
\begin{definition}
A family of Markov chains is said to have cutoff at time $t_n$ with window $w_n=o(t_n)$ if and only if
$$\lim_{c \rightarrow \infty} \lim_{n \rightarrow \infty} d^{(n)}(t_n-cw_n)= 1 \mbox{ and } \lim_{c \rightarrow \infty} \lim_{n \rightarrow \infty} d^{(n)}(t_n+cw_n)= 0,$$ 
where $d^{(n)}(t)$ denotes the total variation distance of the $n$--th Markov chain.
\end{definition}

Given a Markov chain exhibiting cutoff, one can ask for more precise control on the exact distance from stationarity. This is known as the limit profile, defined as:
\[ \Phi_x(c):=\lim_{n \rightarrow \infty} d_x^{(n)}\left( t_{n} + c w_{n} \right), \mbox{ for all $c \in \mathbb{R}$. } \]
If this limit does not exist, similar definitions apply for the $\limsup$ and the $\liminf$.

The limit profile is known for only a few  Markov chains, such as  the riffle shuffle \cite{BaD}, the asymmetric exclusion process on the segment \cite{BuN}, the simple exclusion process on the cycle \cite{Lacoin}, and the simple random walk on Ramanujan graphs \cite{LP}, etc. Teyssier \cite{Teyssier} determined the limit profile for random transpositions. Using representation theory of the symmetric group $S_n$, he used Fourier transform arguments for studying limit profiles that work for random walks on groups using a generating set that is a conjugacy class. In particular, he proved that for random transpositions, if $t= \frac{1}{2}n \log n + cn$, then
\[\Phi_x(c)=\Vert \textup{Poiss}(1+e^{-2c})- \textup{Poiss}(1) \Vert_{\textup{T.V.}},\]
for every $x\in S_n$ and $c \in \mathbb{R}$. In \cite{NT}, this limit profile behavior is proven to hold for the $k$-cycle card shuffle, under the assumption that $k=o(n/ \log n)$.

Our main result studies the limit profile of the star transpositions shuffle, which is not generated by a conjugacy class. One step of the star transpositions shuffle consists of picking a card of the deck uniformly at random and transposing it with the top card.  Flatto, Odlyzko and Wales \cite{FOW} found the eigenvalues of the transition matrix using representation theory of the symmetric group. Diaconis analysed the eigenvalue behavior in \cite{SFlour} to prove that star transpositions exhibit cutoff at $n \log n $ with window of order $n$. The following theorem discusses the limit profile of star transpositions.

\begin{theorem}\label{star}
For the star transpositions card shuffle at time $t= n(\log n +c)$, we have that
\[ \Phi_x(c) =d_{\textup{T.V.}} (\textup{Poiss}(1+e^{-c}), \textup{Poiss}(1)), \]
for every $x\in S_n$ and $c \in \mathbb{R}$.
\end{theorem}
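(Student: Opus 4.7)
\section*{Proof Proposal}

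The plan is to apply the comparison framework summarised in the introduction in order to deduce Theorem~\ref{star} from Teyssier's limit profile for random transpositions via an explicit time-rescaling.

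First, I would exhibit a common orthonormal eigenbasis of $L^{2}(S_n)$ for the two chains. The random-transposition operator $R = \binom{n}{2}^{-1}\sum_{i<j}(i,j)$ is central in $\mathbb{C}[S_n]$ and hence acts as the scalar $r^{RT}_\lambda = \binom{n}{2}^{-1}\sum_{\square\in\lambda} c(\square)$ on each irreducible $V_\lambda$; it is therefore diagonal in any orthonormal basis refined by the isotypic decomposition. The star-transposition operator $S = \frac{1}{n}\sum_{i=1}^n (1,i)$ satisfies $(1,n)\,S\,(1,n) = \frac{1}{n}(I+X_n)$, where $X_n = \sum_{k<n}(k,n)$ is the $n$-th Jucys--Murphy element. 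Because $X_n$ is diagonal in the Gelfand--Tsetlin basis $\{v_T\}$ (with eigenvalue equal to the content $c_T(n)$ of the cell of $T$ labelled $n$), the conjugated basis $\{(1,n)\,v_T\}$ simultaneously diagonalises $S$ with eigenvalue $(1+c_T(n))/n$ and $R$ with eigenvalue $r^{RT}_{\operatorname{shape}(T)}$. Both operators are self-adjoint and stationary with respect to the uniform distribution on $S_n$, so the algebraic hypotheses of the comparison framework are in place.

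Second, I would identify the correct time-rescaling. At the dominant irreducible $(n-1,1)$ one has $r^{ST}=1-1/n$ and $r^{RT}=1-2/n$, and for $t_{ST}=n\log n + cn$ and $t_{RT}=\tfrac{1}{2}n\log n + \tfrac{c}{2}n$, both $(1-1/n)^{t_{ST}}$ and $(1-2/n)^{t_{RT}}$ equal $\tfrac{e^{-c}}{n}(1+o(1))$. This identifies the time-change $t_{ST}=2\,t_{RT}$, under which the Fourier decay on every irreducible matches to leading order. Diaconis's analysis in \cite{SFlour} supplies total-variation cutoff for star transpositions at $n\log n$ with window $n$, and the $\ell^{2}$ cutoff at the same scale is a routine corollary of the same Plancherel bound; the corresponding statements for random transpositions are classical. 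These are the cutoff hypotheses required by the comparison theorem. Feeding these inputs into the comparison theorem yields the identity $\Phi^{ST}_x(c) = \Phi^{RT}_x(c/2)$, and plugging Teyssier's formula
\[
\Phi^{RT}_x(c') \;=\; d_{\textup{T.V.}}\!\bigl(\textup{Poiss}(1+e^{-2c'}),\,\textup{Poiss}(1)\bigr)
\]
in at $c'=c/2$ gives $d_{\textup{T.V.}}(\textup{Poiss}(1+e^{-c}),\textup{Poiss}(1))$, which is the content of Theorem~\ref{star}.

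The main obstacle I anticipate is the per-representation matching for partitions $\lambda$ with more than two rows. For such $\lambda$ the star eigenvalue genuinely depends on which corner of $\lambda$ contains the label $n$, so a single irreducible contributes several distinct exponentials to the Fourier expansion of $S^{t_{ST}}$, whereas $R^{t_{RT}}$ contributes only one. Reconciling these requires a combinatorial identity relating the average corner content of $\lambda$, weighted by the skew dimensions $f^{\lambda/\square}$, to the single scalar $r^{RT}_\lambda$, to the precision demanded by the cutoff window of size $n$. I expect the abstract comparison lemma of the paper to encode precisely this identity; failing that, one would perform the analogue of Teyssier's Plancherel computation directly on the Flatto--Odlyzko--Wales spectrum of the star chain, using the random-transposition asymptotics as a guide.
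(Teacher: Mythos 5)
Your proposal correctly identifies the overall strategy (compare star transpositions to random transpositions via a common eigenbasis), establishes simultaneous diagonalizability through a slightly different route (Jucys--Murphy elements and the Gelfand--Tsetlin basis, versus the paper's direct verification via Lemma~\ref{commute} that conjugacy-class walks commute with all symmetric walks), and finds the right time-rescaling $t_{*}=2t$ from the $(n-1,1)$ representation. The reparametrization check matching $e^{-2c'}$ at $c'=c/2$ to $e^{-c}$ is also correct.

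However, there is a genuine gap exactly where you flag the ``main obstacle.'' Theorem~\ref{main} does not ``encode'' the reconciliation between the single random-transposition eigenvalue $s_\lambda$ and the several star eigenvalues $\overline{s}_{\lambda^{(i)}}$ on a given $\lambda$; it only reduces the problem to showing that
\[
\sum_{\lambda}d_\lambda\sum_i d_{\lambda^{(i)}}\bigl(s_\lambda^{\,t}-\overline{s}_{\lambda^{(i)}}^{\,t_*}\bigr)^2 \longrightarrow 0,
\]
and establishing this limit is the entire content of Section~\ref{st}. Moreover, the resolution in the paper is \emph{not} the ``averaged corner content, weighted by skew dimensions'' identity you conjecture. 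Instead the paper shows (Lemma~\ref{dim}) that for $i>1$ one has $d_{\lambda^{(i)}}\leq \tfrac{4^{j}}{n}d_\lambda$ where $j=n-\lambda_1$, so the non-dominant corners are suppressed by a factor $1/n$; only the corner $i=1$, whose star eigenvalue $1-j/n$ matches $s_\lambda\approx 1-2j/n$ at twice the time, contributes at leading order. Combined with a Teyssier-style truncation (restricting to $j<M$), Cauchy--Schwarz for the cross terms, a separate treatment of $\lambda'_1>n-M$ via the transpose symmetry $\overline{s}_{\lambda'^{(\lambda_i)}}=\tfrac{2}{n}-\overline{s}_{\lambda^{(i)}}$, and a parity adjustment of $t$, this completes the estimate. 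None of these steps appear in your proposal, and without them the argument does not close.
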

The proof of Theorem \ref{star} requires a new idea, since Teyssier's technique works for conjugacy invariant random walks and the more general technique analyzed in \cite{NT} assumes the knowledge of certain eigenfunctions. The main ingredient of the proof of Theorem \ref{star} is that star transpositions and random transpositions are simultaneously diagonalizable. This allows us to only use the eigenvalues, found in \cite{FOW}, and not the eigenfunctions of star transpositions. We also need to use the eigenvalues \cite{DiaSha} and the limit profile \cite{Teyssier} of random transpositions.

To prove Theorem \ref{star}, we develop a technique that allows us to compare the limit profile of the shuffle in question to the limit profile of the random transpositions shuffle.
The following main lemma of this paper allows us to compare limit profiles of reversible Markov chains on the same space $X$, with the same stationary measure. 
\begin{lemma} \label{l2}
Let $P$ and $Q$ be the transition matrices of two reversible Markov chains on a finite space $X$ that share the same eigenbasis and stationary measure $\pi$. In particular, we denote by $f_i: X \rightarrow \mathbb{C}$ the orthonormal eigenvectors satisfying
\[Pf_i= \beta_i f_i \mbox{ and } Qf_i= q_i f_i, \]
where $i=1, \ldots, \vert X \vert$ and $\beta_1=q_1=1$. We have that 
\[4 \Vert P^t_x-Q^{t_*}_x \Vert^2_{T.V.} \leq \sum_{i=2}^{\vert X \vert} f_i(x)^2 (\beta_i^{t}- q_i^{{t_*}})^2,\]
for every $x\in X$. If $P,Q$ are the transition matrices of transitive Markov chains then
\[4 \Vert P^t_x-Q^{t_*}_x \Vert^2_{T.V.} \leq \sum_{i=2}^{\vert X \vert} (\beta_i^{t}- q_i^{{t_*}})^2,\]
for every $x\in X$.
\end{lemma}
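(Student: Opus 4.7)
The plan is to reduce the total variation distance to an $\ell^2$-type (chi-squared) quantity via Cauchy--Schwarz, and then expand this quantity in the shared eigenbasis of $P$ and $Q$.

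First I would apply the standard Cauchy--Schwarz estimate
\[
4\|P^t_x-Q^{t_*}_x\|^2_{\TV} = \left(\sum_{y\in X}\sqrt{\pi(y)}\,\frac{|P^t(x,y)-Q^{t_*}(x,y)|}{\sqrt{\pi(y)}}\right)^{\!2} \leq \sum_{y\in X}\frac{\bigl(P^t(x,y)-Q^{t_*}(x,y)\bigr)^2}{\pi(y)},
\]
using $\sum_y \pi(y)=1$. This is the well-known bound of the total variation distance by the chi-squared distance. It reduces matters to estimating an $L^2(\pi)$-norm of the density $\frac{P^t(x,\cdot)-Q^{t_*}(x,\cdot)}{\pi(\cdot)}$.

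Next I would use reversibility to spectrally decompose the two kernels. Since $\{f_i\}$ is orthonormal in $L^2(\pi)$, and since $P^t f_i = \beta_i^t f_i$, I can compute the coefficients $c_i(x)$ in the expansion $\frac{P^t(x,y)}{\pi(y)} = \sum_i c_i(x) f_i(y)$ by inner products:
\[
c_i(x)=\left\langle \tfrac{P^t(x,\cdot)}{\pi(\cdot)},f_i\right\rangle_{\!\pi} = \sum_y P^t(x,y)f_i(y) = \beta_i^t f_i(x),
\]
and similarly for $Q^{t_*}$ with $q_i^{t_*}$ in place of $\beta_i^t$. Subtracting,
\[
\frac{P^t(x,y)-Q^{t_*}(x,y)}{\pi(y)} = \sum_{i}(\beta_i^t-q_i^{t_*})f_i(x)f_i(y).
\]
Plugging this into the chi-squared expression, the double sum over $i,j$ collapses by orthonormality of $\{f_i\}$ in $L^2(\pi)$, leaving $\sum_i (\beta_i^t-q_i^{t_*})^2 f_i(x)^2$. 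The $i=1$ term drops since $\beta_1=q_1=1$, producing the first claimed bound.

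For the transitive case I would use that $\pi$ is then uniform and, since both chains are invariant under a common transitive symmetry group (true e.g.\ for random walks on a group, which is the relevant setting for star and random transpositions), $\|P^t_x - Q^{t_*}_x\|_{\TV}$ is independent of $x$. Averaging the first bound against $\pi(x)=1/|X|$ turns the right-hand side into
\[
\sum_{i=2}^{|X|}(\beta_i^t-q_i^{t_*})^2 \sum_{x\in X}\pi(x)f_i(x)^2=\sum_{i=2}^{|X|}(\beta_i^t-q_i^{t_*})^2
\]
by $\|f_i\|_{L^2(\pi)}^2=1$, giving the second bound. The only subtle point is justifying that the averaging is valid, i.e.\ that the two chains share a group that acts transitively on $X$; in the applications considered in this paper both are random walks on $S_n$, so this is automatic.
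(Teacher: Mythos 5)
Your proof is correct and follows essentially the same route as the paper: Cauchy--Schwarz (equivalently, bounding T.V.\ by the $\chi^2$/$\ell^2(\pi)$ distance), spectral expansion of both kernels in the common orthonormal eigenbasis $\{f_i\}$ so the cross terms cancel by orthonormality, and then, for the transitive case, exploiting $x$-independence of the left-hand side and summing (or averaging) over $x$ together with $\sum_x f_i(x)^2\pi(x)=1$. Your explicit remark that the transitive case implicitly requires a \emph{common} transitive symmetry group preserving both $P$ and $Q$ (automatic when both are random walks on the same group, as in the application) is a point the paper leaves tacit, and is a worthwhile clarification rather than a divergence in method.
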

Letting $ n$ approach infinity, we obtain the following general result.
\begin{theorem}\label{main}
Let $P$ and $Q$ be the transition matrices of two reversible Markov chains on a finite space $X$ that satisfy the original assumptions of Lemma \ref{l2}. Assume $P$ exhibits cutoff at $t_n$ with window $w_n$ and $Q$ exhibits cutoff at $\overline{t}_n$ with window $\overline{w}_n$.  For $t = t_n + c w_n$ and $t_* = \overline{t}_n + c \overline{w}_n$, we have that  
\begin{equation}\label{ei}
 \vert \overline{\Phi}_x(c) - \Phi_x(c)  \vert \leq \frac{1}{2}  \lim_{n \rightarrow \infty}\left(\sum_{i=2}^{\vert X \vert} f_i(x)^2 (\beta_i^{t}- q_i^{{t_*}})^2 \right)^{1/2},
\end{equation}
for every $c \in \mathbb{R}$, $ x \in X$.
If $P,Q$ are furthermore the transition matrices of transitive Markov chains then
\begin{equation}\label{ei2}
 \vert \overline{\Phi}_x(c) -\Phi_x(c)  \vert \leq \frac{1}{2}  \lim_{n \rightarrow \infty}\left(  \sum_{i=2}^{\vert X \vert} (\beta_i^{t}- q_i^{{t_*}})^2 \right)^{1/2} ,
\end{equation}
for every $ c \in \mathbb{R}$, $ x\in X$. 
\end{theorem}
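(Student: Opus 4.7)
The plan is to deduce Theorem \ref{main} from Lemma \ref{l2} by a single application of the triangle inequality for total variation distance, followed by passing to the limit $n \to \infty$.

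First, I would observe that the total variation distance is a genuine metric on probability measures on $X$, so for each fixed $n$, $t$, and $t_*$ we have the reverse triangle inequality
\[
\bigl| \Vert P^t_x - \pi \Vert_{\textup{T.V.}} - \Vert Q^{t_*}_x - \pi \Vert_{\textup{T.V.}} \bigr| \;\leq\; \Vert P^t_x - Q^{t_*}_x \Vert_{\textup{T.V.}}.
\]
This is the only analytic ingredient beyond Lemma \ref{l2}: the shared stationary measure $\pi$ of both chains is the common reference point, and it cancels out so that the comparison reduces purely to $\Vert P^t_x - Q^{t_*}_x \Vert_{\textup{T.V.}}$.

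Next, I would apply Lemma \ref{l2} directly to the right-hand side with the choices $t = t_n + c w_n$ and $t_* = \overline{t}_n + c \overline{w}_n$ dictated by the cutoff windows of $P$ and $Q$. This yields, for each $n$,
\[
\bigl| \Vert P^t_x - \pi \Vert_{\textup{T.V.}} - \Vert Q^{t_*}_x - \pi \Vert_{\textup{T.V.}} \bigr| \;\leq\; \frac{1}{2} \Bigl( \sum_{i=2}^{\vert X \vert} f_i(x)^2 (\beta_i^{t} - q_i^{t_*})^2 \Bigr)^{1/2},
\]
with the obvious specialization in the transitive case where the $f_i(x)^2$ factor disappears.

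Finally, I would let $n \to \infty$. By the cutoff hypotheses (and the assumption that the limit profiles $\Phi_x(c)$ and $\overline{\Phi}_x(c)$ exist, otherwise one replaces them with limsup or liminf and the same argument goes through), the left-hand side converges to $\bigl| \overline{\Phi}_x(c) - \Phi_x(c) \bigr|$, while the right-hand side converges (or has a well-defined limsup) to the expression appearing in \eqref{ei} or \eqref{ei2}. Since the inequality is preserved under taking limits, this gives the desired conclusion.

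The only potentially delicate point is a conceptual one rather than a technical obstacle: one must check that the reverse triangle inequality for total variation is the right move, i.e.\ that subtracting off $\pi$ on each side before comparing does not lose information. This is automatic because $\pi$ is the same measure for both chains by hypothesis, so the argument passes through cleanly and the substance of the theorem is entirely contained in Lemma \ref{l2}.
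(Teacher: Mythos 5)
Your proof is correct and takes essentially the same route as the paper, which dispatches this theorem in one line (``Theorem \ref{main} follows from Lemma \ref{l2} by taking limits on both sides''). You have simply made explicit the reverse triangle inequality step that the paper leaves implicit, which is indeed the only ingredient needed to pass from the bound on $\Vert P^t_x - Q^{t_*}_x \Vert_{\textup{T.V.}}$ in Lemma \ref{l2} to the bound on $\vert \overline{\Phi}_x(c) - \Phi_x(c) \vert$, together with the observation that both chains share the same stationary measure $\pi$ so the comparison is against a common reference point.
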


The assumptions of Lemma \ref{l2} are equivalent to saying that $P$ and $Q$ commute, i.e. $PQ=QP$.
Even though the assumptions of Lemma \ref{l2} and Theorem \ref{main} sound restrictive, it is actually not rare that two reversible random walks on the same group are simultaneously diagonalizable.  As explained in Section \ref{rtra}, it is a standard fact that random transpositions commutes with any other symmetric random walk on $S_n$. In general, the transition matrix of a symmetric, conjugacy invariant random walk on a group $G$ commutes with any other symmetric transition matrix of a random walk on $G$ (Laurent Saloff-Coste, personnal communication).  

In Section \ref{op}, we discuss possible conjectures and the difficulties that occur when trying to apply Theorem \ref{main} for the case of random-to-random.
Lemma \ref{l2} and Theorem \ref{main} are proven in Section \ref{next}. The proof of Theorem \ref{star} is contained in Section \ref{st}. Section \ref{rtra} summarizes the main tools that we need from representation theory and random transpositions.

\section{The proof of the main lemma}\label{next}
\begin{proof}[Proof of Lemma \ref{l2}]
Since $P$ and $Q$ are both reversible Markov chains with respect to the same stationary measure $\pi$ and share the same orthonormal eigenbasis $f_i$, we can write
\[\frac{P^t_x(y)}{\pi(y)} = 1 + \sum_{j=2}^{\vert X \vert} f_j(x) f_j(y) \beta_j^t\]
and similarly 
\[\frac{Q^t_x(y)}{\pi(y)} = 1 + \sum_{j=2}^{\vert X \vert} f_j(x) f_j(y) q_j^{t},\]
as explained in Lemma 12.2 of \cite{LPW}. The orthonormality of the $\lbrace f_i \rbrace$ gives that
\begin{equation}\label{cs}
d_{2,x}(t)=\bigg \Vert \frac{Q^t_x-P^t_x}{\pi} \bigg \Vert_2^2= \sum_{j=2}^{\vert X \vert} f_j(x)^2 (\beta_j^t- q_j^{t_*})^2,
\end{equation}
where the $\ell^2$ norm is defined as
\[\Vert g \Vert_2^2= \sum_{x \in X} g^2(x) \pi(x).\]
Equation \eqref{cs} and Cauchy-Schwartz give \eqref{ei}.

Note that if $P$ and $Q$ are transitive, then $d_x(t)=d(t)$ for every $x \in X$. Summing over $x\in X$ on both sides of \eqref{ei}, we get
\[n  \vert \overline{\Phi}_x(c) - \Phi_x(c)  \vert \leq  \sum_{x \in X}\sum_{i=2}^{\vert X \vert} f_i(x)^2(\beta_j^t- q_j^{t_*})^2\]
The standard fact that $\sum_{x \in X}f_i(x)^2=n$ finishes the proof of \eqref{ei2}.
\end{proof}

Theorem \ref{main} follows from Lemma \ref{l2} by taking limits on both sides.

\section{Representation theory of the symmetric group and Random transpositions}\label{rtra}
In this section, we summarize features of the random transposition shuffle that are important for the analysis of the limit profile of star transpositions. The transition matrix of random transpositions is given by
\[Q(x,y)= \begin{cases}
\frac{1}{n} & \mbox{ if } y=x,\\
\frac{2}{n^2} & \mbox{ if } y=xs, \mbox{ where $s$ is a transposition,} \\
 0 & \mbox{ otherwise.}
\end{cases}\]
It is easy to check that $Q$ is a symmetric matrix. Diaconis and Shahshahani \cite{DiaSha} found all the eigenvalues of $Q$ and their multiplicities. Their formulas are in terms of standard Young tableaux, which we now introduce. 

By a \emph{partition} $\lambda=(\lambda_1,\ldots, \lambda_k)$ of $n$ we mean a sequence of natural numbers such that $\lambda_1\geq \lambda_2 \geq\ldots \geq \lambda_k>0$ and $\sum_{i=1}^k \lambda_i=n$. Every partition coressponds to a \emph{Young diagram}, which has $\lambda_i$ boxes in row $i$. We will also consider the transpose partition $\lambda'$ of $\lambda$ to be the partition occuring by transposing the Young diagram of $\lambda$.
\begin{example}
We can think of $\lambda=(3,2)$ as
$\lambda=
\young(~~~,~~)
$.
In this case, $\lambda'= (2,2,1)=
\young(~~,~~,~)
$.
\end{example}
To describe the eigenvalues of many shuffles and their multiplicities, we will need the notion of \emph{standard Young tableaux} (SYT) of type $\lambda$, which is a filling of the Young diagram $\lambda$ with all the numbers in $[ n]$, so that the entries of each row and column of the diagram appear in increasing order.
\begin{example}
One SYT of $\lambda=(3,2)$ is
$\lambda=
\young(124,35)
$.
\end{example}

The following notion will be important when counting the multiplicities of the eigenvalues of $Q$.
\begin{definition}
Let $\lambda$ be a \emph{partition} of $n$. We define $d_{\lambda}$ to be the number of SYT of $\lambda$.
\end{definition}

The following lemma provides a useful bound on $d_{\lambda}.$
\begin{lemma}[Corollary 2 of \cite{DiaSha}] \label{j2}
Let $\lambda $ be a partition of $n$. Let $j=n- \lambda_1$, then
$d_{\lambda} \leq {n \choose j} \sqrt{j!}$.
\end{lemma}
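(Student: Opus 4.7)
The plan is to bound the number of standard Young tableaux of shape $\lambda$ by decomposing such a tableau into its first row and everything below it. Write $\mu = (\lambda_2, \ldots, \lambda_k)$; this is a partition of $j = n - \lambda_1$. Given a SYT $T$ of shape $\lambda$, let $S_T \subseteq [n]$ denote the set of entries appearing in rows $2, \ldots, k$, so $|S_T| = j$. Because the first row of $T$ is strictly increasing, it must consist of the elements of $[n] \setminus S_T$ in ascending order, hence is determined by $S_T$ alone. The sub-tableau in rows $2, \ldots, k$ has entry set $S_T$; composing with the unique order-preserving bijection $S_T \to [j]$ yields a standard Young tableau $T^{\#}$ of shape $\mu$. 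Order preservation ensures that the row- and column-increase conditions survive the relabeling, so $T^{\#}$ really is a SYT of $\mu$.

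The map $T \mapsto (S_T, T^{\#})$ is then a well-defined injection from the SYT of shape $\lambda$ into $\binom{[n]}{j} \times \{\text{SYT of shape } \mu\}$, giving the intermediate bound
\[
d_\lambda \;\leq\; \binom{n}{j}\, d_\mu.
\]
It remains to control $d_\mu$. For this I would invoke the classical identity $\sum_{\nu \vdash j} d_\nu^2 = j!$, which comes from decomposing the regular representation of $S_j$ into irreducibles (and is equivalently the sum of squares from the hook-length formula over all shapes of size $j$). Since $d_\mu^2$ is a single nonnegative term on the left-hand side, $d_\mu \leq \sqrt{j!}$. Combining with the previous display yields $d_\lambda \leq \binom{n}{j}\sqrt{j!}$.

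The only step requiring attention is verifying that the construction is injective and that $T^{\#}$ is genuinely a SYT of $\mu$ — but both are immediate from the strict monotonicity along rows and columns together with the monotonicity of the relabeling $S_T \to [j]$. I do not anticipate a real obstacle; the inequality is essentially a clean product of a combinatorial factor $\binom{n}{j}$ (choosing which entries sit below row one) and the representation-theoretic factor $\sqrt{j!}$ (bounding the dimension of a single irreducible of $S_j$).
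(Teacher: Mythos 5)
Your proof is correct. The paper itself does not prove this statement but simply cites it as Corollary 2 of Diaconis--Shahshahani, and your argument reproduces the standard proof from that reference: strip off the first row to get $d_\lambda \leq \binom{n}{j} d_\mu$ via the injection $T \mapsto (S_T, T^{\#})$, then bound $d_\mu \leq \sqrt{j!}$ using $\sum_{\nu \vdash j} d_\nu^2 = j!$. Both steps are sound, the injectivity and well-definedness checks you flag are indeed routine, and the combination gives exactly the stated inequality.
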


The following lemma presents the eigenvalues of random transpositions.
\begin{lemma}[Lemma 7 of \cite{DiaSha}]\label{rteig}
Let $\lambda=(\lambda_1,\ldots, \lambda_k)$ be a partition of $n$. The eigenalues of $Q$ are given by
\[s_{\lambda}= \frac{1}{n} + \frac{n-1}{n} r_{\lambda},\]
where 
\[r_{\lambda} = \frac{1}{{n \choose 2}} \sum_{i=1}^k\bigg [{\lambda_i \choose 2} - {\lambda_i' \choose 2} \bigg].\]
Each eigenvalue $s_{\lambda}$ occurs with multiplicity $d_{\lambda}^2$.
\end{lemma}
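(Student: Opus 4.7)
The plan is to view random transpositions as a random walk on $S_n$ driven by the probability measure $\mu$ with $\mu(e)=1/n$ and $\mu(\tau)=2/n^{2}$ for each of the $\binom{n}{2}$ transpositions $\tau$, so that $Q(x,y)=\mu(x^{-1}y)$. Since $\mu$ is constant on conjugacy classes, standard Fourier analysis on finite groups applies. Using the Peter--Weyl decomposition $L^{2}(S_n)\cong\bigoplus_{\lambda\vdash n}V_{\lambda}^{\oplus d_{\lambda}}$ together with Schur's lemma, the convolution operator $Q$ acts as a scalar on each isotypic component corresponding to the irreducible representation $\rho_{\lambda}$, and that scalar is
\[
s_{\lambda}\;=\;\frac{1}{d_{\lambda}}\sum_{g\in S_n}\mu(g)\,\chi_{\lambda}(g).
\]
Because $V_{\lambda}$ appears $d_{\lambda}$ times in the regular representation and each copy contributes a $d_{\lambda}$-dimensional $s_{\lambda}$-eigenspace of $Q$, the multiplicity $d_{\lambda}^{2}$ in the claim follows immediately.

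Next I would evaluate the scalar explicitly. Splitting the sum into the identity and the transposition class, and using that all $\binom{n}{2}$ transpositions share the same character value $\chi_{\lambda}(\tau_{0})$, one obtains
\[
s_{\lambda}\;=\;\frac{1}{n}\cdot\frac{\chi_{\lambda}(e)}{d_{\lambda}}+\frac{2}{n^{2}}\cdot\binom{n}{2}\cdot\frac{\chi_{\lambda}(\tau_{0})}{d_{\lambda}}\;=\;\frac{1}{n}+\frac{n-1}{n}\,r_{\lambda},
\]
where $r_{\lambda}:=\chi_{\lambda}(\tau_{0})/d_{\lambda}$ is the normalized character at a transposition. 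This puts $s_{\lambda}$ in the claimed form, and reduces the lemma to producing the closed expression for $r_{\lambda}$.

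For the closed formula I would use the Jucys--Murphy elements $J_{k}=\sum_{i<k}(i,k)\in\mathbb{C}[S_n]$, whose sum $J_{2}+\cdots+J_{n}$ equals the class sum $\sum_{\tau}\tau$ of all transpositions. The classical spectral theorem for the $J_{k}$'s says that on $V_{\lambda}$ they are simultaneously diagonalizable with joint spectrum given by the contents $j-i$ of the boxes of $\lambda$; summing, the class sum $\sum_{\tau}\tau$ acts on $V_{\lambda}$ as the scalar $\sum_{(i,j)\in\lambda}(j-i)$. Comparing with the Schur-lemma expression $\binom{n}{2}\,r_{\lambda}$ for the same operator yields
\[
r_{\lambda}\;=\;\frac{1}{\binom{n}{2}}\sum_{(i,j)\in\lambda}(j-i).
\]
A routine reindexing, evaluating the content sum row-by-row and column-by-column, converts $\sum_{(i,j)\in\lambda}(j-i)$ into $\sum_{i}\bigl[\binom{\lambda_{i}}{2}-\binom{\lambda_{i}'}{2}\bigr]$, which is the desired expression.

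The main obstacle is the character evaluation at a transposition. The Jucys--Murphy route above is clean but invokes a nontrivial spectral theorem; alternatively one can apply the Murnaghan--Nakayama rule (or Frobenius's formula) directly to the cycle type $(2,1^{n-2})$, which after standard manipulation gives the same $r_{\lambda}$. Either way, the remaining content-sum identity is a purely combinatorial calculation and presents no real difficulty.
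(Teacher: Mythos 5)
The paper does not prove Lemma~\ref{rteig}; it is quoted verbatim from Diaconis--Shahshahani, so there is no in-paper argument to compare against. Your proof is correct and is essentially the standard one: Schur's lemma on the decomposition $\mathbb{C}[S_n]\cong\bigoplus_\lambda M_{d_\lambda}(\mathbb{C})$ gives that convolution by the class function $\mu$ acts as the scalar $s_\lambda=\hat\mu(\rho_\lambda)/ \mathrm{id}=\frac1{d_\lambda}\sum_g\mu(g)\chi_\lambda(g)$ on each $d_\lambda^2$-dimensional block, and the split over the identity and the $\binom n2$ transpositions gives $s_\lambda=\tfrac1n+\tfrac{n-1}{n}\,\chi_\lambda(\tau_0)/d_\lambda$ exactly as you write. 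Diaconis and Shahshahani obtain the closed form for $\chi_\lambda(\tau_0)/d_\lambda$ from Frobenius's character formula; your route through the Jucys--Murphy elements (total content of $\lambda$ equals the scalar by which the transposition class sum acts) reaches the same $\sum_{(i,j)\in\lambda}(j-i)=\sum_i\bigl[\binom{\lambda_i}{2}-\binom{\lambda_i'}{2}\bigr]$ and is a clean, equally standard alternative. The only caveat, which you implicitly acknowledge by phrasing it as ``each eigenvalue $s_\lambda$ occurs with multiplicity $d_\lambda^2$,'' is that distinct $\lambda$ can in principle yield the same numerical eigenvalue, so $d_\lambda^2$ is the multiplicity contributed by the $\lambda$-block rather than the multiplicity of the number $s_\lambda$ per se; this matches the convention in the cited lemma and does not affect anything downstream.
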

The formula for the $s_{\lambda}$ gives the following standard fact which is used in both \cite{DiaSha} and \cite{Teyssier}.
\begin{corollary}\label{j}
Let $\lambda $ be a partition of $n$. If $j=n- \lambda_1$ is constant, then
\[s_{\lambda}= 1-\frac{2j}{n} +O \left(\frac{1}{n^2}\right). \]
Similarly, if $j=n- \lambda'_1$ is constant, then
\[s_{\lambda}= -1+\frac{2(j+1)}{n} +O \left(\frac{1}{n^2}\right). \]
\end{corollary}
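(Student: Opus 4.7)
The plan is to substitute the explicit formula of Lemma \ref{rteig} into the expression for $s_\lambda$ and show that when $j$ is held fixed one of the two sums $\sum_i \binom{\lambda_i}{2}$, $\sum_i \binom{\lambda_i'}{2}$ is forced to concentrate on its first term while the other stays bounded, after which the asymptotic expansion is routine.

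First I would unpack what it means for $j=n-\lambda_1$ to be constant. Writing $\lambda=(\lambda_1,\lambda_2,\ldots,\lambda_k)$, the constraint $\lambda_1=n-j$ forces $\sum_{i\ge 2}\lambda_i=j$, so in particular $\lambda_i \le j$ for $i\ge 2$ and the number of parts $k=\lambda_1'$ is at most $j+1$. This immediately gives two structural bounds: (i) $\sum_{i\ge 2}\binom{\lambda_i}{2}\le \binom{j}{2}\cdot j = O(1)$, and (ii) every $\lambda_i'$ is bounded by $j+1$, with $\lambda_i'=0$ for $i>\lambda_1$. To control $\sum_i\binom{\lambda_i'}{2}$ I would use the standard identity $\sum_i \binom{\lambda_i'}{2}=\sum_i(i-1)\lambda_i$, which for this $\lambda$ gives $\sum_{i\ge 2}(i-1)\lambda_i\le (k-1)\cdot j=O(1)$. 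Together these say $\sum_i\binom{\lambda_i}{2}=\binom{n-j}{2}+O(1)$ and $\sum_i\binom{\lambda_i'}{2}=O(1)$, where the implied constants depend only on $j$.

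Plugging into Lemma \ref{rteig} gives
\[
r_\lambda = \frac{(n-j)(n-j-1)}{n(n-1)} + O\!\left(\frac{1}{n^2}\right),
\]
and a direct expansion of $(n-j)(n-j-1)/(n(n-1))$ around $n=\infty$ yields $1-\tfrac{2j}{n}+O(1/n^2)$. Combined with $s_\lambda = 1/n + (1-1/n)r_\lambda$, the $1/n$ terms cancel and one arrives at $s_\lambda = 1-\tfrac{2j}{n}+O(1/n^2)$, proving the first assertion.

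For the second assertion, the cleanest route is to notice that $r_{\lambda'}=-r_\lambda$ directly from the definition of $r_\lambda$, since transposing swaps $\lambda_i\leftrightarrow \lambda_i'$. So if $j=n-\lambda_1'$ is fixed, applying the first part to $\mu=\lambda'$ (for which $\mu_1=n-j$) gives $r_\mu=1-\tfrac{2j}{n}+O(1/n^2)$, hence $r_\lambda=-1+\tfrac{2j}{n}+O(1/n^2)$, and substituting into $s_\lambda=1/n+(1-1/n)r_\lambda$ produces the $+2/n$ correction that turns $2j/n$ into $2(j+1)/n$. I do not expect a genuine obstacle here: the only subtlety is keeping track of which lower-order terms get absorbed into the $O(1/n^2)$ bound, and that is handled cleanly by using the identity $\sum_i\binom{\lambda_i'}{2}=\sum_i(i-1)\lambda_i$ to reduce both sums to the same combinatorial data.
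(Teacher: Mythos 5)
Your proof is correct. The paper does not actually prove Corollary~\ref{j}; it dismisses it as a ``standard fact'' following from Lemma~\ref{rteig} and cites \cite{DiaSha} and \cite{Teyssier}, so there is no in-paper argument to compare against. Your computation is the natural one and it is complete: with $j=n-\lambda_1$ fixed you correctly isolate $\binom{\lambda_1}{2}=\binom{n-j}{2}$ as the only unbounded term, bound $\sum_{i\geq 2}\binom{\lambda_i}{2}=O(1)$ since $\sum_{i\geq 2}\lambda_i=j$, and bound $\sum_i\binom{\lambda_i'}{2}=O(1)$ via the identity $\sum_i\binom{\lambda_i'}{2}=\sum_i(i-1)\lambda_i$ together with $k\leq j+1$. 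The expansion $r_\lambda=\frac{(n-j)(n-j-1)}{n(n-1)}+O(1/n^2)=1-\frac{2j}{n}+O(1/n^2)$ and the cancellation of the $1/n$ in $s_\lambda=\frac1n+\frac{n-1}{n}r_\lambda$ are both right. For the second assertion, using $r_{\lambda'}=-r_\lambda$ (which the paper itself records as equation~\eqref{transpose}) to reduce to the first case, and then tracking that $\frac1n+\frac{n-1}{n}(-1+\frac{2j}{n})= -1+\frac{2}{n}+\frac{2j}{n}+O(1/n^2)$ yields the $2(j+1)/n$, is exactly what makes the constant shift from $j$ to $j+1$ appear. No gaps.
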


Finally, the following lemma discusses the limit profile of random transpositions.
\begin{lemma}[Teyssier \cite{Teyssier}]
For random transpositions, we have that at $t= \frac{1}{2} n (\log n +c)$
\[\overline{\Phi}(c)=d_{T.V.} (\textup{Poiss}(1+e^{-c}), \textup{Poiss}(1)),\]
where $c \in \mathbb{R}$.
\end{lemma}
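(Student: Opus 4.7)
The plan is to follow a Fourier-analytic approach on $S_n$ and extract the precise limit profile. Since random transpositions is conjugacy invariant, Fourier inversion on the symmetric group gives, for every $\sigma \in S_n$,
\[ \frac{Q^t(\textup{id}, \sigma)}{\pi(\sigma)} - 1 = \sum_{\lambda \vdash n,\ \lambda \neq (n)} d_\lambda \chi^\lambda(\sigma)\, s_\lambda^t, \]
where $\chi^\lambda$ is the irreducible character of type $\lambda$ and $s_\lambda$ is the eigenvalue from Lemma \ref{rteig}. The goal is to identify the weak limit of the left-hand side as a random variable under $\sigma \sim \pi$, and then compute its $L^1$ norm (which is $2\,d_{\textup{T.V.}}$).

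First I would truncate the sum. The $\ell^2$ upper bound
\[ 4\lVert Q^t_{\textup{id}} - \pi \rVert_{\textup{T.V.}}^2 \leq \sum_{\lambda \neq (n)} d_\lambda^2\, s_\lambda^{2t}, \]
combined with Lemma \ref{j2} and Corollary \ref{j}, shows that the contribution from partitions with $n - \lambda_1 > K$ (and, symmetrically, $n - \lambda_1' > K$) is arbitrarily small once $K$ is large, uniformly in $n$ at the cutoff time $t = \tfrac{1}{2} n(\log n + c)$. Only partitions of the form $\lambda = (n-j, \mu)$ with $|\mu| = j \leq K$ therefore need detailed analysis.

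For such $\lambda$, three asymptotic ingredients combine: by Corollary \ref{j}, $s_\lambda^t \sim n^{-j} e^{-jc}$ for fixed $j$; by the hook length formula, $d_\lambda \sim n^j d_\mu / j!$; and by the Murnaghan--Nakayama rule applied to partitions close to $(n)$, the normalized character $\chi^{(n-j,\mu)}(\sigma)/d_{(n-j,\mu)}$ converges to a polynomial in the counts of short cycles of $\sigma$, with the leading contribution coming from the number of fixed points $F(\sigma)$. Summing over $\mu \vdash j$ weighted by $d_\mu$, and using character orthogonality on $S_j$, collapses the inner sum into an expression in the falling factorials of $F(\sigma)$. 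I would then check that these terms are precisely those in the Poisson likelihood ratio, so that under $\pi$,
\[ \frac{Q^t(\textup{id}, \sigma)}{\pi(\sigma)} \longrightarrow \frac{\mathbb{P}(\textup{Poiss}(1 + e^{-c}) = F(\sigma))}{\mathbb{P}(\textup{Poiss}(1) = F(\sigma))} \]
in distribution as $n \to \infty$. Since $F$ under $\pi$ converges to $\textup{Poiss}(1)$, the TV distance converges to
\[ \tfrac{1}{2} \mathbb{E}_{N \sim \textup{Poiss}(1)} \left\lvert \frac{\mathbb{P}(\textup{Poiss}(1+e^{-c}) = N)}{\mathbb{P}(\textup{Poiss}(1) = N)} - 1 \right\rvert = d_{\textup{T.V.}}(\textup{Poiss}(1+e^{-c}), \textup{Poiss}(1)). \]

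The main obstacle is the combinatorial heart of step three: obtaining clean asymptotics for $\chi^{(n-j,\mu)}(\sigma)/d_\lambda$ uniformly in $\sigma$, and then arranging the bookkeeping so that the sum over $\mu \vdash j$ collapses into exactly the Poisson likelihood ratio rather than a messier expression. I would attack this by verifying that the generating function of the limiting random variable (as $\sigma \sim \pi$) matches that of a mixed Poisson kernel, exploiting character orthogonality on the small symmetric groups $S_j$ to fold the partitions $\mu$ of each size $j \leq K$ together, and finally controlling a uniform integrability argument to pass from weak convergence of the density ratio to convergence of the $L^1$ norm.
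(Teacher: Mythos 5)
This statement is cited from Teyssier's paper and is not proved in the present article, so there is no ``paper's own proof'' to compare against; I will instead compare your sketch with Teyssier's published argument. Your outline does capture the overall strategy: Fourier expansion of the density ratio, $\ell^2$ truncation to shapes with $\lambda_1 > n - K$ (and their transposes), and pointwise identification of the bulk with the Poisson likelihood ratio $e^{-e^{-c}}(1+e^{-c})^{F(\sigma)}$, followed by $L^1$ convergence. The $L^1$ step is handled exactly as you say: the tail is controlled in $L^2$ via Cauchy--Schwarz and Lemma~\ref{j2}/Corollary~\ref{j}, and the finite bulk is a bounded sum that converges pointwise.

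Two points in your heart-of-the-argument paragraph deserve correction or elaboration. First, the mechanism that collapses $\sum_{\mu\vdash j} d_\mu\,\chi^{(n-j,\mu)}(\sigma)$ is not really ``character orthogonality on $S_j$''; it is Young's rule applied to the permutation module on ordered $j$-tuples, whose character is the falling factorial $(F(\sigma))_j$. Combining the Kostka multiplicities $K_{(n-i,\nu),(n-j,1^j)} = \binom{j}{i} d_\nu$ with binomial inversion yields
\[
\sum_{j\ge 0}\frac{y^j}{j!}\sum_{\mu\vdash j} d_\mu\,\chi^{(n-j,\mu)}(\sigma)\ \longrightarrow\ e^{-y}(1+y)^{F(\sigma)},
\]
which, together with $d_{(n-j,\mu)} \sim n^j d_\mu/j!$ and $s_{(n-j,\mu)}^t \to n^{-j}e^{-cj}$, produces the Poisson ratio at $y=e^{-c}$. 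Orthogonality of characters is not what is doing the work here; it is the combinatorics of branching. Second, your sketch does not address the shapes with $\lambda_1'$ close to $n$. These carry eigenvalues near $-1$ (Corollary~\ref{j}, second part), have $|s_\lambda|^t \asymp n^{-j}$ as well, and satisfy $\chi^{\lambda'} = \operatorname{sgn}\cdot\chi^\lambda$; one must argue separately (e.g., via the oscillating sign and the parity of $t$, or by a direct cancellation estimate) that their contribution to the $L^1$ norm vanishes in the limit. Without this you have only identified the limiting density ratio on the event that the sign contribution is negligible, and that needs to be justified. With those two fixes the sketch is a faithful description of Teyssier's proof.
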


The following lemma will allow us to compare the limit profile of star transpositions to the limit profile of random transpositions. As pointed out by Laurent Saloff-Coste, it is a standard fact that the transition matrix of a symmetric random walk on a group $G$ generated by a union of conjugacy classes commutes with the transition matrix of any symmetric random walk on $G$. For completeness, we include the proof of this statement for the case of $Q$.
\begin{lemma}\label{commute}
Let $P$ be the transition matrix of a symmetric random walk on $S_n$. The transition matrix $Q$ of random transpositions commutes with $P$.
\end{lemma}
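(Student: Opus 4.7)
The plan is to recognize $P$ and $Q$ as left-convolution operators associated to step distributions $\mu_P$ and $\mu_Q$ on $S_n$, and then to exploit the fact that $\mu_Q$ is a class function. This makes the element $\sum_g \mu_Q(g)\, g$ lie in the center of the group algebra $\mathbb{C}[S_n]$, so it commutes with every other element — in particular with $\sum_g \mu_P(g)\, g$. Translated back into matrices, this is precisely $PQ = QP$.

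Concretely, I would first write $P(x,y) = \mu_P(x^{-1}y)$ and $Q(x,y) = \mu_Q(x^{-1}y)$, where $\mu_Q$ is the probability measure giving mass $1/n$ to $\id$ and mass $2/n^2$ to each transposition. A direct computation of the matrix product gives
\[
(PQ)(x,z) = \sum_y \mu_P(x^{-1}y)\,\mu_Q(y^{-1}z) = \sum_g \mu_P(g)\,\mu_Q(g^{-1}x^{-1}z) = (\mu_P \ast \mu_Q)(x^{-1}z),
\]
and analogously $(QP)(x,z) = (\mu_Q \ast \mu_P)(x^{-1}z)$, where $\ast$ denotes convolution on $S_n$. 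The lemma therefore reduces to the identity $\mu_P \ast \mu_Q = \mu_Q \ast \mu_P$ of functions on $S_n$.

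To prove this identity, I would observe that $\mu_Q$ is a class function: for every transposition $t$ and every $\sigma \in S_n$, the element $\sigma t \sigma^{-1}$ is again a transposition, so $\mu_Q(\sigma t \sigma^{-1}) = \mu_Q(t)$, and similarly for the identity. Since $g^{-1}h$ and $hg^{-1}$ are conjugate in $S_n$ via conjugation by $g$, it follows that $\mu_Q(g^{-1}h) = \mu_Q(hg^{-1})$ for all $g,h \in S_n$. Substituting this and applying the change of variables $k = hg^{-1}$ yields
\[
(\mu_P \ast \mu_Q)(h) = \sum_g \mu_P(g)\,\mu_Q(g^{-1}h) = \sum_g \mu_P(g)\,\mu_Q(hg^{-1}) = \sum_k \mu_Q(k)\,\mu_P(k^{-1}h) = (\mu_Q \ast \mu_P)(h),
\]
which is exactly what is required.

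There is no real obstacle here; the argument is entirely routine bookkeeping once one identifies the group-algebra interpretation. The symmetry hypothesis on $\mu_P$ plays no role in the commutativity itself — it is relevant for the reversibility of $P$ in the applications of Lemma \ref{l2}, but not for this lemma. The content of the proof is precisely the classical statement that class functions span the center of the group algebra, specialized to the conjugation-invariant measure $\mu_Q$.
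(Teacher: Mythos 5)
Your proof is correct and rests on the same essential idea as the paper's: the random-transpositions step distribution is a class function, and conjugation invariance is what makes the convolution commute. The paper carries this out by a direct change of variables in the matrix-entry sum (substituting $w = ys$ and then conjugating $s \mapsto (x^{-1}y)s(y^{-1}x)$), whereas you package the same fact more abstractly as the statement that $\mu_Q$ lies in the center of the group algebra, which makes the argument cleaner and highlights why the hypothesis on $P$ (symmetry) is in fact unnecessary for commutativity — an observation the paper does not make explicit but which is consistent with its proof.
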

\begin{proof}
Let $\mu$ be the probability measure on $S_n$, such that $P(x,w)= \mu(x^{-1} w)$ and define $\nu$ to be the probability measure on $S_n$, such that $Q(x,w)= \nu(x^{-1} w)$.

Let $x, y \in S_n$. We write
\[(PQ)(x,y)=\sum_{w \in S_n}P(x,w) Q(w,y) .\]
Let $S'$ be the set of all transpositions and let $S=S' \cup \{\text{id}\}$. We have that
\[(PQ)(x,y)=\sum_{s \in S}P(x,ys) Q(ys,y)=\sum_{s \in S}\mu(x^{-1} ys)\nu(s^{-1}). \]
We want to use the fact that $\nu $ is constant on all transpositions, which form a conjugacy class. Because of this, we write
\[(PQ)(x,y)=\sum_{s \in S}\mu((x^{-1} y)s(y^{-1}x )(x^{-1}y))\nu(s^{-1}). \]
Let $\overline{s}= (x^{-1} y)s(y^{-1}x  )\in S$ and recall that $\nu(s)=\nu(s^{-1})= \nu(\overline{s})= \nu(\overline{s}^{-1})$.
Therefore,
\[(PQ)(x,y)=\sum_{\overline{s} \in S}\mu(\overline{s}x^{-1}y)\nu(\overline{s}^{-1})=\sum_{\overline{s} \in S} P(x \overline{s}^{-1}, y)  Q(x , x \overline{s}^{-1})= (QP)(x,y), \]
which finishes the proof. 
\end{proof}

\section{Star transpositions}\label{st}
The goal of this section is to prove Theorem \ref{star} using Theorem \ref{main}. 

In the following lemma, we recall the eigenvalues of star transpositions.
\begin{lemma}[Theorem 3.7 of \cite{FOW}]\label{steig}
Let $\lambda =(\lambda_1,\ldots, \lambda_k)$ be a partition of $n$. Let $\lambda^{(i)}$ be a partition of $n-1$ occurring by deleting the last box in the $i$-th row of $\lambda$ (if this results to a partition). The eigenvalues of $P$ corresponding to $\lambda$ are given by $\overline{s}_{\lambda^{(i)}}=\frac{1}{n}(\lambda_i -i +1) $, and they occur with multiplicity $d_{\lambda}d_{\lambda^{(i)}}$.
\end{lemma}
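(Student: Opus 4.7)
The plan is to diagonalize $P$ by Fourier analysis on $S_n$. Writing $\mu$ for the measure uniform on $\{e, (1,2), \ldots, (1,n)\}$ so that $P(x,y)=\mu(x^{-1}y)$, the spectrum of $P$ (with multiplicities) is the disjoint union over partitions $\lambda$ of $n$ of the spectrum of $\hat\mu(\lambda) := \sum_g \mu(g)\rho^\lambda(g)$, where each eigenvalue of the $d_\lambda \times d_\lambda$ matrix $\hat\mu(\lambda)$ contributes to $P$ with multiplicity equal to $d_\lambda$ times its multiplicity inside $\hat\mu(\lambda)$. For our measure,
\[ \hat\mu(\lambda) \;=\; \frac{1}{n}\Bigl(\,I + \sum_{j=2}^n \rho^\lambda((1,j))\Bigr). \]

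The central idea is to recognize the group-algebra identity
\[ \sum_{j=2}^n (1,j) \;=\; T_n - T_{n-1}, \]
where $T_k \in \mathbb{C}[S_k]$ is the sum of all transpositions in $S_k$ and $S_{n-1} \le S_n$ is embedded as the stabilizer of $1$: every transposition in $S_n$ either involves $1$ or lies in $S_{n-1}$, and the two families are disjoint. Both $T_n$ and $T_{n-1}$ are sums over conjugacy classes and hence central in their respective algebras, so they act as scalars on irreducibles. By the computation underlying Lemma \ref{rteig}, $T_k$ acts on the irreducible $S_k$-module $V^\nu$ (for $\nu$ a partition of $k$) as the scalar $\sum_i \bigl[\binom{\nu_i}{2}-\binom{\nu'_i}{2}\bigr]$, which equals the content sum $\sum_{b \in \nu} c(b)$, with $c(b)=j-i$ the content of a box in row $i$, column $j$.

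I will then combine this with the branching rule: restricted to $S_{n-1}$, the module $V^\lambda$ decomposes multiplicity-freely as $\bigoplus_i V^{\lambda^{(i)}}$, the sum running over rows $i$ for which removing the last box yields a valid partition $\lambda^{(i)}$ of $n-1$. On the summand $V^{\lambda^{(i)}}$, the element $T_{n-1}$ acts as $\sum_{b \in \lambda^{(i)}} c(b)$, while $T_n$ (a scalar on all of $V^\lambda$) acts as $\sum_{b \in \lambda} c(b)$. Subtracting isolates the content of the single removed box at position $(i, \lambda_i)$, namely $\lambda_i - i$. Therefore $\hat\mu(\lambda)$ acts on the $d_{\lambda^{(i)}}$-dimensional summand $V^{\lambda^{(i)}}$ as the scalar $\frac{1}{n}(\lambda_i - i + 1) = \overline{s}_{\lambda^{(i)}}$. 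Multiplying the multiplicity $d_{\lambda^{(i)}}$ of this scalar inside $\hat\mu(\lambda)$ by the factor $d_\lambda$ from the Fourier framework yields the claimed total multiplicity $d_\lambda \, d_{\lambda^{(i)}}$ in the spectrum of $P$.

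The main obstacle is purely bookkeeping: one must cleanly invoke the Fourier decomposition of a convolution operator on $S_n$, the centrality and content-sum scalar action of $T_k$, and the branching rule for $S_n \supset S_{n-1}$. Once these ingredients are assembled, the identity $T_n - T_{n-1}$ does all the work, and the scalar left over after subtraction is transparently the content of the corner box removed from $\lambda$, which is exactly the quantity $\lambda_i - i$ appearing in the formula. A minor care point is that the same numerical value $\overline{s}_{\lambda^{(i)}}$ may arise for different $(\lambda, i)$; these are correctly counted with multiplicity because they live in distinct isotypic components.
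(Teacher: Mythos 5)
Your proof is correct. The paper itself does not prove this lemma — it imports it directly from Flatto–Odlyzko–Wales as a known result — so there is no paper proof to compare against, but your argument is a clean, self-contained derivation. The key observation, that $\sum_{j=2}^n(1,j) = T_n - T_{n-1}$ and hence $\hat\mu(\lambda) = \frac{1}{n}\bigl(I + T_n - T_{n-1}\bigr)$ acts as a scalar on each branching summand $V^{\lambda^{(i)}}$ (because $T_n$ is central in $\mathbb{C}[S_n]$ and $T_{n-1}$ is central in $\mathbb{C}[S_{n-1}]$, and the branching is multiplicity-free), is exactly the right mechanism. It is worth noting that $T_n - T_{n-1}$ is (up to the conjugation exchanging the roles of $1$ and $n$) the Jucys–Murphy element $X_n$, whose eigenvalues on the Gelfand–Tsetlin basis adapted to the chain $S_1\subset\cdots\subset S_n$ are precisely the contents of the removable boxes; so your proof is the Jucys–Murphy derivation of the FOW spectrum in lightly disguised form, which is the standard modern route to this result. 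The bookkeeping is all in order: the content-sum identity $\sum_{b\in\lambda}c(b)=\sum_i\bigl[\binom{\lambda_i}{2}-\binom{\lambda_i'}{2}\bigr]$ matches the $r_\lambda$ formula of Lemma \ref{rteig}, the removed box $(i,\lambda_i)$ has content $\lambda_i-i$, and the overall multiplicity $d_\lambda\, d_{\lambda^{(i)}}$ follows from the $d_\lambda$ copies of $V^\lambda$ in the regular representation. Your closing remark about repeated numerical eigenvalues across different $(\lambda,i)$ is also correct and correctly handled.
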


To prove Theorem \ref{star} we will need the following lemma.
\begin{lemma}\label{dim}
Let $\lambda$ be a partition of $n$. Let $i>1$ and denote $j= n- \lambda_1$. Then 
\[d_{\lambda^{(i)}} \leq \frac{4^j}{n} d_{\lambda} \quad \mbox{ and } \quad - \frac{j}{n} \leq \overline{s}_{\lambda^{(i)}}  \leq \frac{n-j}{n}.\]
\end{lemma}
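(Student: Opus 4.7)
The plan is to handle the two inequalities separately, using the explicit formula $\overline{s}_{\lambda^{(i)}}=(\lambda_i-i+1)/n$ from Lemma \ref{steig} for the eigenvalue bound, and the hook length formula for the dimension bound.

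For the eigenvalue inequalities, the upper bound is essentially immediate: since $\lambda_i \le \lambda_1 = n-j$ and $i \ge 1$, one has $\lambda_i - i + 1 \le \lambda_1 = n-j$, so $\overline{s}_{\lambda^{(i)}} \le (n-j)/n$. For the lower bound I would first observe that the number of nonempty rows of $\lambda$ is at most $j+1$: the rows beyond the first sum to $j$ and each has length at least $1$, so the total number of rows $k$ satisfies $k \le j+1$, and in particular $i \le j+1$. Combined with $\lambda_i \ge 1$ (required for $\lambda^{(i)}$ to be a valid partition), this yields $\lambda_i - i + 1 \ge 1 - j \ge -j$, hence $\overline{s}_{\lambda^{(i)}} \ge -j/n$.

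For the dimension inequality I would apply the hook length formula $d_\lambda = n!/\prod_{c \in \lambda} h_\lambda(c)$. Let $\mu = \lambda^{(i)}$ and let $(i,\lambda_i)$ denote the removed corner. Removing this corner decreases by exactly $1$ the hook length of every cell lying in row $i$ to the left of the corner or in column $\lambda_i$ above the corner, while leaving all other hook lengths unchanged; the corner itself has hook length $1$ in $\lambda$. Comparing the two products cell by cell yields
\[ \frac{d_\mu}{d_\lambda} \;=\; \frac{1}{n}\prod_{c \in H_i}\frac{h_\lambda(c)}{h_\lambda(c)-1}, \]
where $H_i$ denotes the set of cells whose hook length changes. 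The key observation is that when $i > 1$, every cell $c \in H_i$ satisfies $h_\lambda(c) \ge 2$: a cell to the left of the corner in row $i$ has the corner itself as a neighbor to its right, and a cell above the corner in column $\lambda_i$ has $(i,\lambda_i)$ as a neighbor below. Hence each factor satisfies $h_\lambda(c)/(h_\lambda(c)-1) \le 2$, so the product is bounded by $2^{|H_i|}$.

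The final step is to control $|H_i| = (\lambda_i - 1) + (i-1) = \lambda_i + i - 2$ in terms of $j$. Since $\lambda_i \le \lambda_2 \le j$ and $i \le j+1$, one obtains $|H_i| \le 2j - 1$, so the product above is at most $2^{2j-1} \le 4^j$, giving $d_{\lambda^{(i)}}/d_\lambda \le 4^j/n$ as required. The hypothesis $i>1$ is essential here and is where I expect the only subtlety to arise: if $i=1$, the set $H_1$ would contain the entire first row of $\lambda$ minus the corner, contributing $\lambda_1 - 1 = n - j - 1$ cells, so $2^{|H_1|}$ would vastly exceed $4^j$ and the bound would fail; it is precisely the restriction $i>1$ that forces row $i$ to supply the neighbors needed to make every hook length at least $2$ and keeps $|H_i|$ linear in $j$.
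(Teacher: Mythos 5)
Your proof is correct and follows essentially the same route as the paper: the hook-length formula to express $d_{\lambda^{(i)}}/d_\lambda$ as $\frac{1}{n}$ times a product of ratios $h/(h-1)$ over the cells whose hooks change, each such hook being at least $2$ (so each ratio is at most $2$), and the count of affected cells bounded linearly in $j$ via $\lambda_i\le j$ and $i\le j+1$; the eigenvalue bounds are obtained identically from $\overline{s}_{\lambda^{(i)}}=(\lambda_i-i+1)/n$. One small nit in your closing remark: the hooks in $H_i$ are $\ge 2$ even when $i=1$ (the removed corner still lies in each such hook), so the hypothesis $i>1$ is needed only to keep $|H_i|$ of order $j$ rather than of order $n$, not to make the $h/(h-1)\le 2$ step go through.
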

\begin{proof}
In the diagram associated to $\lambda$, the hook $h_{i,j}$ of the $(i,j)$ box is the number of boxes which are below or on the right of
our box (including our box). We call $\equ( \lambda)$ the product of the hooks of the partition $\lambda$. It is a standard fact that
 \[d_{\lambda}=\frac{n!}{\equ( \lambda)}.\]
For example, the entry of each box in
$\lambda=
\young(431,21)
$ gives the corresponding hook. Then $d_{\lambda}=5$.

For $\lambda^{(i)}$ to be a valid partition, we have that $h_{i, \lambda_i}=1$.  We have that
\[\frac{d_{\lambda^{(i)}}}{d_{\lambda}} = \frac{1}{n} \prod_{\ell=1}^{i-1} \frac{h_{\ell,\lambda_{i}} }{h_{\ell,\lambda_{i}} -1}  \prod_{k=1}^{\lambda_i-1} \frac{ h_{i,k} }{h_{i,k}-1 }.\]
Notice that each $h_{a,b}$ appearing in the above products has to be at least equal to $2$, since the $(i, \lambda_i)$ box and the $(a,b)$ box contribute to $h_{a,b}$. Therefore, each ratio $ \frac{ h_{a,b} }{h_{a,b}-1 }$ is at most $2$, which gives that
\[d_{\lambda^{(i)}}\leq \frac{2^{i + \lambda_i-2}}{n} d_{\lambda}. \]
Using the fact that $i+ \lambda_i \leq 2j+2$ for every $i>1$ finishes the proof of the first statement.

The final claim holds because $i \leq j+1$ and therefore
$$ -\frac{j}{n} \leq \overline{s}_{\lambda^{(i)}}=\frac{1}{n}(\lambda_i -i +1)  \leq  \frac{\lambda_1}{n} =  \frac{n-j}{n},$$
for every $i>1$.
\end{proof}

The following standard fact is a consequence of the branching rule for the irreducible representations of $S_n$. We refer to Theorem 3.6 of \cite{FOW} for the exact statement of the branching rule.
\begin{lemma}\label{branch}
Let $\lambda $ be a partition of $n$. Then
$d_{\lambda}= \sum_i d_{\lambda^{i}}$ and $d_{\lambda^{(i)}} = d_{\lambda'^{(\lambda_i)}} $.
\end{lemma}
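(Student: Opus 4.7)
The plan is to prove both assertions by explicit tableau bijections, both of which are standard consequences of the branching rule cited just before the lemma. Neither step requires any heavy machinery beyond the very definitions of partitions, Young diagrams, and standard Young tableaux.

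For the first identity $d_{\lambda}=\sum_i d_{\lambda^{(i)}}$, I would classify standard Young tableaux of shape $\lambda$ by the location of the largest entry $n$. In any SYT, the cell containing $n$ must be a removable corner, i.e.\ a cell $(i,\lambda_i)$ with either $i$ equal to the number of rows of $\lambda$ or $\lambda_{i+1}<\lambda_i$; these are exactly the indices $i$ for which $\lambda^{(i)}$ is a valid partition. Erasing the cell containing $n$ produces a SYT of shape $\lambda^{(i)}$, and this is a bijection from SYT$(\lambda)$ to $\bigsqcup_i$ SYT$(\lambda^{(i)})$ (the inverse appends an $n$ to the previously removed corner, and this still yields a standard filling since $n$ is the largest entry). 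Counting gives the sum formula.

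For the second identity $d_{\lambda^{(i)}}=d_{(\lambda')^{(\lambda_i)}}$, I would invoke the classical fact that transposition across the main diagonal is a bijection between SYT of shape $\mu$ and SYT of shape $\mu'$, so that $d_{\mu}=d_{\mu'}$ for every partition $\mu$. Applied to $\mu=\lambda^{(i)}$, it therefore suffices to verify the shape identity $(\lambda^{(i)})'=(\lambda')^{(\lambda_i)}$. By definition, $\lambda^{(i)}$ is obtained from $\lambda$ by deleting the corner cell $(i,\lambda_i)$. Transposition sends this cell to the cell $(\lambda_i,i)$ of $\lambda'$, which is precisely the rightmost cell of row $\lambda_i$ in $\lambda'$, and removing it from $\lambda'$ gives $(\lambda')^{(\lambda_i)}$ by the definition of the notation. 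Combining the two observations closes the argument.

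Neither step is a real obstacle; the only care needed is in the book-keeping of the second part, where one must confirm that the transposed image of the corner $(i,\lambda_i)$ of $\lambda$ is in fact a removable corner of $\lambda'$ sitting in the $\lambda_i$-th row, so that the symbol $(\lambda')^{(\lambda_i)}$ is well-defined and agrees with $(\lambda^{(i)})'$. This is immediate from the fact that $(i,\lambda_i)$ being a removable corner of $\lambda$ translates, under transposition, to $(\lambda_i,i)$ being a removable corner of $\lambda'$.
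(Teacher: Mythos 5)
Your proof is correct, and it is worth noting that the paper does not actually supply a proof of this lemma at all: it simply cites the branching rule (Theorem 3.6 of \cite{FOW}) as a standard fact from the representation theory of $S_n$. The representation-theoretic route is to observe that $\mathrm{Res}^{S_n}_{S_{n-1}} V^\lambda \cong \bigoplus_i V^{\lambda^{(i)}}$ and take dimensions, together with $V^{\lambda'} \cong V^\lambda \otimes \mathrm{sgn}$ to get $d_\lambda = d_{\lambda'}$. Your route is purely combinatorial, arguing directly from the definition of $d_\lambda$ as the number of standard Young tableaux of shape $\lambda$: the first identity comes from the bijection that removes the cell containing $n$ (which must lie in a removable corner), and the second from transposition invariance of SYT counts together with the shape identity $(\lambda^{(i)})' = (\lambda')^{(\lambda_i)}$, which you correctly verify by tracking the corner $(i,\lambda_i)$ to $(\lambda_i,i)$ under transposition. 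Since the paper defines $d_\lambda$ combinatorially rather than as a dimension of an irreducible, your argument is if anything the more self-contained one; the paper's appeal to the branching rule buys brevity and a pointer to the literature, while your bijective argument buys a proof from first principles that requires no representation theory.
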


We are now ready to prove Theorem \ref{star}.
\begin{proof}[Proof of Theorem \ref{star}]
Lemma \ref{commute} allows us to use Theorem \ref{main}.
Therefore, the goal is to prove that 
\[
  \lim_{n \rightarrow \infty} \sum_{\lambda } d_{\lambda} \sum_{i} d_{\lambda^{(i)}}(s_{\lambda}^{t}- \overline{s}_{\lambda^{(i)}}^{{t_*}})^2 =0,\] 
where $s_{\lambda}$ and $\overline{s}_{\lambda^{(i)}}$ are the eigenvalues of random transpositions and star transpositions respectively. Recalling Lemmas \ref{rteig} and \ref{steig}, we have 
\[s_{\lambda} = \frac{1}{n} + \frac{n-1}{n} r_{\lambda} \text{ and } \overline{s}_{\lambda^{(i)}}=\frac{1}{n} + \frac{n-1}{n} \overline r_{\lambda^{(i)}},\]
where 
\[r_{\lambda} = \frac{1}{{n \choose 2}} \sum_{i=1}^k\bigg [{\lambda_i \choose 2} - {\lambda_i' \choose 2} \bigg] \text{ and } \overline{r}_{\lambda^{(i)}}= \frac{\lambda_i-i}{n-1}.\]
Note that 
\begin{equation}\label{transpose}
r_{\lambda'}= - r_{\lambda}  \mbox{ and }\overline r_{(\lambda')^{(\lambda_i)}}= - \overline{r}_{\lambda^{(i)}},
\end{equation}
where $\lambda'$ is the transpose diagram of $\lambda$.

Our strategy is to prove that for every $\varepsilon>0$ and $c \in \mathbb{R}$, there is an $M=M(c, \varepsilon)$ such that
\begin{enumerate}
\item $\sum_{\lambda_1 \leq n-M} d_{\lambda}^2 \vert s_{\lambda} \vert^{2t} \leq \varepsilon,  $
\item $\sum_{\substack{\lambda_1 \leq n-M \\ \lambda'_1 \leq n-M }} d_{\lambda} \sum_{i \geq 1} d_{\lambda^{(i)}} \vert \overline{s}_{\lambda^{(i)}} \vert^{2t_*} \leq \varepsilon,  $
\item $\sum_{\substack{\lambda_1 \leq n-M \\ \lambda'_1 \leq n-M }} d_{\lambda}   \vert s_{\lambda} \vert^t  \sum_{i \geq 1} d_{\lambda^{(i)}} \vert \overline{s}_{\lambda^{(i)}} \vert^{t_*}\leq \varepsilon, \mbox{ and } $
\item $\sum_{\lambda_1 > n-M} d_{\lambda} \sum_{i} d_{\lambda^{(i)}} \vert s^t_{\lambda}- \overline{s}_{\lambda^{(i)}}^{t_*} \vert^{2} \leq \varepsilon , \mbox{ and }\sum_{\lambda'_1 > n-M} d_{\lambda} \sum_{i} d_{\lambda^{(i)}} \vert s^t_{\lambda}- \overline{s}_{\lambda^{(i)}}^{t_*} \vert^{2} \leq \varepsilon$
\end{enumerate}
for sufficiently large $n.$

Lemma 4.1 of Teyssier \cite{Teyssier} states that for every $\varepsilon \in (0,1)$ and $c \in \mathbb{R}$, there is an $M_1=M_1(c, \varepsilon)$ such that at $t= \frac{1}{2}n( \log n +c )$ we have that
\[\sum_{\lambda_1 \leq n-M_1} d_{\lambda} \vert s_{\lambda} \vert^t \leq \varepsilon,\]
where $n $ is sufficiently large.
This implies that
\[\sum_{\lambda_1 \leq n-M_1} d_{\lambda}^2 \vert s_{\lambda} \vert^{2t} \leq \varepsilon,\]
where $n $ is sufficiently large. This gives 1.

Now, we want to prove 2. Let $M_2=M_2(c, \varepsilon)$ be such that $ \sum_{j \geq M_2} \frac{e^{-2cj}}{j!}  \leq \varepsilon/2$. Equation \eqref{transpose} implies that if $\overline{r}_{\lambda^{(i)}}  \geq 0$ then $\overline{s}_{\lambda^{(i)}}= \vert \overline{s}_{\lambda^{(i)}} \vert \geq  \vert \overline{s}_{\lambda'^{(\lambda_i)}} \vert $. Therefore, it suffices to consider only the cases where $\overline{r}_{\lambda^{(i)}}  \geq 0$, which implies that $\overline{s}_{\lambda^{(i)}} \geq 0$. 
Lemmas \ref{j2}, \ref{dim} and \ref{branch} give 
\[\sum_{\substack{\lambda_1 \leq n-M_2 \\ \lambda'_1 \leq n-M_2 }} d_{\lambda} \sum_{\substack{i \geq 1  \\ \overline{s}_{\lambda^{(i)}} \geq 0 }} d_{\lambda^{(i)}} \vert \overline{s}_{\lambda^{(i)}} \vert^{2t_*} \leq  \sum_{j \geq M_2} d_{\lambda}^2 \left(1- \frac{j}{n} \right)^{2t} \leq    \sum_{j \geq M_2} \frac{n^{2j}}{j!} e^{-\frac{2tj}{n}} .\]
For $t=n \log n + cn$, we have 
\[\sum_{\substack{\lambda_1 \leq n-M_2 \\ \lambda'_1 \leq n-M_2 }} d_{\lambda}  \sum_{\substack{i \geq 1  \\ \overline{s}_{\lambda^{(i)}} \geq 0 }} d_{\lambda^{(i)}} \vert \overline{s}_{\lambda^{(i)}} \vert^{2t_*} \leq  \sum_{j \geq M_2} \frac{e^{-2cj}}{j!}  \leq \varepsilon/2.\]
This gives 2.

We now focus on 3. Let $M_3= \max{ \lbrace M_1, M_2 \rbrace }$. Just like in 2, it suffices to consider only the partitions $\lambda$ where $\overline{r}_{\lambda^{(i)}}  \geq 0$. In combination with Lemma \ref{dim}, this implies that $0 \leq \overline{s}_{\lambda^{(i)}}\leq  \overline{s}_{\lambda^{(1)}} = 1- \frac{j}{n}$, where $j=n- \lambda_1$. Therefore, Lemma \ref{branch} gives that
\[\sum_{\lambda_1 \leq n-M_3} d_{\lambda} \vert s_{\lambda} \vert^t  \sum_{\substack{i \geq 1  \\ \overline{s}_{\lambda^{(i)}} \geq 0 }} d_{\lambda^{(i)}} \vert \overline{s}_{\lambda^{(i)}} \vert^{t_*} \leq\sum_{\lambda_1 \leq n-M_3} d_{\lambda}^2 \vert s_{\lambda} \vert^t  \left( 1- \frac{j}{n} \right)^{t_*}.\]
Applying Cauchy-Schwartz on the right hand side gives 
\[(\sum_{\lambda_1 \leq n-M_3} d_{\lambda} \vert s_{\lambda} \vert^t  \sum_{\substack{i \geq 1  \\ \overline{s}_{\lambda^{(i)}} \geq 0 }} d_{\lambda^{(i)}} \vert \overline{s}_{\lambda^{(i)}} \vert^{t_*})^2 \leq \sum_{\lambda_1 \leq n-M_3} d_{\lambda}^2 \vert s_{\lambda} \vert^{2t}\sum_{\lambda_1 \leq n- M_3} d_{\lambda}^2 \left(1- \frac{j}{n} \right)^{2t_*} . \]

We set $M_3= \max{ \lbrace M_1, M_2 \rbrace }$ so that both $\sum_{\lambda_1 \leq n-M_3} d_{\lambda}^2 \vert s_{\lambda} \vert^{2t}$ and $\sum_{\lambda_1 \leq n-M_3} d_{\lambda}^2 \left(1- \frac{j}{n} \right)^{2t_*}$ are less than $\varepsilon$ as we saw in 1 and 2. This gives 3.

We now focus on proving 4. Let $M= \max{ \lbrace M_1,M_2,M_3\rbrace }$. We start by writing 
\[\sum_{\lambda_1 > n-M} d_{\lambda} \sum_{i} d_{\lambda^{(i)}} \vert s^t_{\lambda}- \overline{s}_{\lambda^{(i)}}^{t_*} \vert^{2} = \sum_{\lambda_1 > n-M} d_{\lambda} \sum_{i>1} d_{\lambda^{(i)}} \vert s^t_{\lambda}- \overline{s}_{\lambda^{(i)}}^{t_*}  \vert^2+ \sum_{\lambda_1 > n-M} d_{\lambda} d_{\lambda^{(1)}} \vert s^t_{\lambda}- \overline{s}_{\lambda^{(1)}}^{t_*} \vert^{2} . \]
 
Notice that for  $j=n-\lambda_1$, with $j$ constant Lemma \ref{dim} gives that $\vert \overline{s}_{\lambda^{(i)}} \vert \leq \max \{ \frac{j}{n} , 1-\frac{j}{n}\}=1-\frac{j}{n} $, for sufficiently large $n$. Therefore,
\begin{align} \label{per}
\sum_{\lambda_1 > n-M} d_{\lambda} \sum_{i>1} d_{\lambda^{(i)}} \vert s^t_{\lambda}- \overline{s}_{\lambda^{(i)}}^{t_*} \vert^{2} & \leq \frac{4^MM}{n} \sum_{j<M} d_{\lambda}^2 \left( \vert s^{t}_{\lambda} \vert+ \left(1- \frac{j}{n} \right)^{t_*} \right)^2
\end{align}
for sufficient large $n$. Using Corollary \ref{j},  and the facts that $t= \frac{1}{2}n(\log n +c)$ and $t_*= n(\log n +c)$, we have that
\[ \left( 1- \frac{j}{n} \right)^{t_*}  \leq\frac{e^{-cj}}{n^j} \mbox{ and } s^{t}_{\lambda}= \left(1-\frac{2j}{n} +O \left(\frac{1}{n^2}\right) \right)^t \leq 3\frac{e^{-cj}}{n^j}  \]
for sufficiently lareg $n$. Therefore, we have that
\begin{equation}\label{ff}
\eqref{per} 
 \leq \frac{4^{M+2}M}{n} \sum_{j<M} \frac{e^{2cj}}{j!}  \leq \varepsilon/2,
\end{equation}
for sufficient large $n$.

Finally, Corollary \ref{j} and Lemma \ref{branch} give that
\begin{align*}
\sum_{\lambda_1 > n-M} d_{\lambda} d_{\lambda^{(1)}} \vert s^t_{\lambda}- \overline{s}_{\lambda^{(1)}}^{t_*} \vert^{2} & \leq \sum_{j<M} d_{\lambda}^2 \left(  \left(1-\frac{2j}{n} +O \left(\frac{1}{n^2}\right) \right)^t - \left(1-\frac{j}{n} \right)^{t_*} \right)^2\cr
&  \leq \sum_{j<M} d_{\lambda}^2 \left(  \frac{e^{-cj}}{n^j}\left(1+ O \left( \frac{\log n}{n} \right) \right)- \frac{e^{-cj}}{n^j}\left(1+O\left(\frac{j^2}{n} \right) \right) \right)^2,
\end{align*}
where in the last inequality, we plug in the values for $t$ and $t_*$ and we use the fact that $ e^{x} \left(1- \frac{x^2}{n} \right)\leq \left( 1+\frac{x}{n}\right)^n \leq e^{x}$ for $\vert x \vert \leq n$. Using Lemma \ref{j2}, we get that
\begin{align*}
\sum_{\lambda_1 > n-M} d_{\lambda} d_{\lambda^{(1)}} \vert s^t_{\lambda}- \overline{s}_{\lambda^{(1)}}^{t_*} \vert^{2} =O \left(\frac{\log^2 n}{n^2}\right).
\end{align*}
Therefore 
\begin{align}\label{fff}
\sum_{\lambda_1 > n-M} d_{\lambda} d_{\lambda^{(1)}} \vert s^t_{\lambda}- \overline{s}_{\lambda^{(1)}}^{t_*} \vert^{2} \leq \varepsilon/2,
\end{align}
for sufficiently large $n$.
Equations \eqref{ff} and \eqref{fff} give the first inequality in 4. To prove the second inequality, we write
\[\sum_{\lambda'_1 > n-M} d_{\lambda} \sum_{i} d_{\lambda^{(i)}} \vert s^t_{\lambda}- \overline{s}_{\lambda^{(i)}}^{t_*} \vert^{2} = \sum_{\lambda'_1 > n-M} d_{\lambda'} \sum_{i} d_{\lambda'^{(\lambda_i)}} \bigg \vert s^t_{\lambda}- \left( \frac{2}{n}-\overline{s}_{\lambda'^{(\lambda_i)}}\right)^{t_*}\bigg \vert^{2} \]
and we proceed similarly as before, using the second part of Corollary \ref{rteig} and the fact that $\overline{s}_{\lambda'^{(1)}}= \frac{\lambda'_1}{n}$. We note that at this point it is important that $t$ and $t_*$ are either both odd or both even, so that the corresponding eigenvalues have the same sign. To achieve this, we could have considered $t$ to be $\frac{1}{2}n (\log n+c) +1 $, which doesn't affect the limit profile behavior.
\end{proof}

\section{Open Questions}\label{op}
\subsection{Conjugacy class shuffles with cutoff}
We are now looking at the case where $G=S_n$ and we want to apply Theorem \ref{main} when $Q$ is the transition matrix of random transpositions. 
Let $P$ be the transition matrix of a lazy random walk on $S_n$ generated by a conjugacy class (or a union of conjugacy classes). 

A standard application of Schur's lemma gives that
\[D=
\begin{pmatrix}
 \hat{P}(\rho_1)
  & \rvline & \bigzero  & \rvline & \bigzero \\
\hline
 \bigzero & \rvline & \ldots &  \rvline &  \bigzero\\
\hline
  \bigzero  & \rvline & \bigzero & \rvline & 
  \hat{P}(\rho_k)
\end{pmatrix},
\]
is a diagonal matrix and in fact each Fourier transform $\hat{P}(\rho_i)$ is a multiple of the identity. 
Since each irreducible representation contributes only one eigenvalue, we form the following conjecture.
\begin{conjecture}
Let $P$ be the transition matrix of a lazy random walk on $S_n$ generated by a conjugacy class (or a union of conjugacy classes) with number of fixed points of order $n-o(n)$. If $P$ exhibits total variation cutoff and $\ell^2$ cutoff at $t_n$ with window $w_n$, then 
\[ \Phi_x(c) = d_{T.V.} (\mbox{Poiss}(1+e^{-c}), \mbox{Poiss}(1)).\]
\end{conjecture}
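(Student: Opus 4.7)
\medskip

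\noindent\textbf{Proposal.} The plan is to apply Theorem \ref{main} with $P$ the given conjugacy class walk and $Q$ random transpositions, following the template of the proof of Theorem \ref{star}. First, since $P$ is supported on a union of conjugacy classes, Lemma \ref{commute} (applied with the roles of $\mu$ and $\nu$ interchanged) gives $PQ=QP$ and hence a common orthonormal eigenbasis indexed by the irreducible representations $\rho_\lambda$ of $S_n$. By Schur's lemma $\hat P(\rho_\lambda)=\beta_\lambda\,\idnt{d_\lambda}$ and $\hat Q(\rho_\lambda)=s_\lambda\,\idnt{d_\lambda}$ for real scalars $\beta_\lambda,s_\lambda$. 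Both walks are transitive on $S_n$, so the transitive version of Theorem \ref{main} reduces the conjecture to showing
\begin{equation*}
\lim_{n\to\infty}\sum_{\lambda\vdash n,\;\lambda\neq (n)} d_\lambda^{\,2}\bigl(\beta_\lambda^{\,t}-s_\lambda^{\,t_*}\bigr)^2=0,
\end{equation*}
where $t=t_n+cw_n$ is the prescribed time for $P$ and $t_*=\frac12 n(\log n+c)$ is the cutoff time plus window for $Q$.

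\smallskip

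\noindent Second, mirroring the four bullets in the proof of Theorem \ref{star}, split the sum according to the value of $j:=n-\lambda_1$ and its transpose $j':=n-\lambda_1'$, into the regimes (A) $j\leq M$, (A$'$) $j'\leq M$, and (B) $\min\{j,j'\}>M$, for a large constant $M=M(c,\varepsilon)$. For (B), use the triangle inequality $(\beta_\lambda^{\,t}-s_\lambda^{\,t_*})^2\leq 2\beta_\lambda^{\,2t}+2s_\lambda^{\,2t_*}$. The $Q$-contribution is bounded by $\varepsilon$ using Lemma 4.1 of \cite{Teyssier}, exactly as in step 2 of the proof of Theorem \ref{star}. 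The $P$-contribution requires showing that the $\ell^2$ cutoff hypothesis on $P$, together with character bounds of Larsen--Shalev type (available precisely because the supporting class has $n-o(n)$ fixed points), forces the $\ell^2$ mass at time $t_n+cw_n$ to concentrate on partitions with $\min\{j,j'\}\leq M$; hence $\sum_{\min\{j,j'\}>M} d_\lambda^{\,2}\beta_\lambda^{\,2t}\leq\varepsilon$ for $n$ large.

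\smallskip

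\noindent Third, handle regime (A) (and by duality (A$'$)). Here one needs the asymptotic
\begin{equation*}
\beta_\lambda^{\,t_n+cw_n}\longrightarrow \frac{e^{-cj}}{n^{j}}\qquad(n\to\infty),
\end{equation*}
to match $s_\lambda^{\,t_*}\to e^{-cj}/n^{j}$ from Corollary \ref{j}. For a conjugacy class supported on $\sigma$ with $k=o(n)$ non-fixed points and $\lambda$ with $n-\lambda_1=j$ fixed, one expects an expansion of the form $\chi_\lambda(\sigma)/d_\lambda = 1-\alpha(\sigma)\,j/n+O(j^{2}/n^{2})$ for some structural constant $\alpha(\sigma)$ depending on the cycle type; the cutoff time and window of $P$ are then determined by $\alpha(\sigma)$, and raising to the power $t_n+cw_n$ produces exactly $e^{-cj}/n^{j}$. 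Regime (A$'$) is then obtained through the involution $\lambda\mapsto\lambda'$, which multiplies characters by $\mathrm{sgn}(\sigma)$; as in the last paragraph of the proof of Theorem \ref{star}, one adjusts the parity of $t$ to match that of $t_*$ so that the signs agree, the adjustment being absorbed into the window $w_n$.

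\smallskip

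\noindent The main obstacle is the character asymptotic of step three. For the specific families treated in \cite{Teyssier} and \cite{NT} (transpositions, $k$-cycles with $k=o(n/\log n)$), such expansions are available via the Frobenius formula and the Murnaghan--Nakayama rule, and these papers effectively prove the conjecture in those cases. For an \emph{arbitrary} union of conjugacy classes with $n-o(n)$ non-fixed points one needs a uniform two-term asymptotic of $\chi_\lambda(\sigma)/d_\lambda$ valid simultaneously for all such $\sigma$ and all $j=O(1)$; the natural tool is the shifted-symmetric-function calculus of Ivanov--Olshanski, which expresses normalized characters as polynomials in $\lambda$ and identifies the leading term controlled by $r_\lambda$. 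Combining such an expansion with the $\ell^2$ cutoff hypothesis should force $t_n,w_n$ to scale in a way that makes $\beta_\lambda^{\,t}$ asymptotically indistinguishable from $s_\lambda^{\,t_*}$ in regime (A); making this quantitative and uniform in the conjugacy class is the technical heart of any proof of the conjecture.
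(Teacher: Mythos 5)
This statement is a \emph{conjecture} in the paper, and the paper supplies no proof of it; the only supporting evidence offered is the remark that the $k$-cycle shuffle with $k=o(n)$ satisfies the conclusion by the results of \cite{NT}. There is therefore no ``paper's own proof'' to compare your outline against. Your proposed strategy --- commutativity with random transpositions via an extension of Lemma \ref{commute}, simultaneous diagonalization indexed by partitions via Schur's lemma, Theorem \ref{main}, and a regime split on $j=n-\lambda_1$ and $j'=n-\lambda_1'$ mirroring the four bullets in the proof of Theorem \ref{star} --- is the natural one and is consistent with the evidence the paper points to. You are also forthright that the crucial ingredient, a uniform two-term asymptotic for $\chi_\lambda(\sigma)/d_\lambda$ valid simultaneously for all conjugacy classes with $n-o(n)$ fixed points and all bounded $j$, is not established; that gap is precisely why the statement remains a conjecture, and your diagnosis of where the difficulty lies is accurate.

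One caution about step (B): you invoke the $\ell^2$ cutoff hypothesis to force the $\ell^2$ mass at time $t_n+cw_n$ to concentrate on partitions with $\min\{j,j'\}\le M$. The $\ell^2$ cutoff hypothesis by itself gives smallness of the \emph{total} $\ell^2$ distance only as $c\to\infty$; for a fixed $c$ it does not automatically tell you where in the dual the mass sits. To conclude that the contribution from $\min\{j,j'\}>M$ is $\le\varepsilon$ you still need quantitative bounds of the form $|\beta_\lambda|\le 1-\Theta(\min\{j,j'\}/n)$ (or a Larsen--Shalev-type character bound) uniformly over the relevant $\lambda$ --- which is the same character control you identify as the technical heart of step (A). So the two steps are not independent: the character asymptotics are needed in both regimes, and without them neither part of the argument closes.
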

To support this claim, we point out that the card shuffle generated by $k$--cycles with $k=o(n)$ satisfies this conjecture as proven in \cite{NT}.
\subsection{Random-to-Random}
One step of the random-to-random card shuffle consists of picking a card and a position of the deck uniformly and independently at random and moving that card to that position. 
This shuffle was introduced by Diaconis and Saloff-Coste \cite{DS}, who proved that the mixing time is $O(n \log n)$. It has been studied by (\cite{JC}, \cite{LZ}, \cite{Lower}, \cite{MorrisQin}), since cutoff at $\frac{3}{4} n \log n - \frac{1}{4} n \log \log n $ was conjectured by Diaconis for fifteen years \cite{DiaConj}. Recently, Bernstein and Nestoridi \cite{BN} proved the upper bound for the mixing time, which in combination with Subag's \cite{Lower} lower bound resolved the desired conjecture.

The main open question of this section is the following.
\begin{op}
What is the limit profile of random-to-random?
\end{op}

Even though Theorem \ref{main} can be applied for random transpositions and random-to-random (due to Lemma \ref{commute}), it is not clear if it can lead to determining the limit profile of random-to-random. In fact the limit profile of random-to-random seems to be different from the limit profile of random transpositions. 

We can see this by looking at just at the $n-1$ dimensional representation $\rho$. For random transpositions, $\rho$ contribute the eigenvalue $1- \frac{2}{n} $ with multiplicity $n^2$. For random-to-random, we have $n^{3/2}$ eigenvalues that are roughly equal to $1- \frac{1}{n}$  and the rest are negligible. When we study the error, we get a term that is roughly equal to
\[n^{3/2}\left( \left(1- \frac{2}{n} \right)^t-  \left(1- \frac{1}{n} \right)^{t_*}\right)^2 + n^{2}\left(1- \frac{2}{n} \right)^{2t},\] 
where $t, t_ *$ are the corresponding cutoff times. This term does not converge to zero as $n$ approches infinity, thus suggesting that the limit profiles of the two shuffles are different.

\textbf{Acknowledgements.}
The author would like to thank Sam Olesker-Taylor, Sarah Peluse, Laurent Saloff-Coste and Dominik Schmid for helpful discussions and comments.

 \bibliographystyle{plain}
\bibliography{star}

\end{document}